\documentclass[12pt]{amsart}

\numberwithin{equation}{section}
\usepackage{enumerate, xspace}
\usepackage{amssymb}
\usepackage{dsfont}
\usepackage[colorlinks]{hyperref}

\newtheorem{theorem}{Theorem}
\newtheorem{thm}{Theorem}[section]
\newtheorem{lemma}[thm]{Lemma}
\newtheorem{proposition}[thm]{Proposition}
\newtheorem{rem}[thm]{Remark}

\newcommand\bl{{\mathbf l}}
\newcommand\bv{{\mathbf v}}


\newcommand\bB{{\mathbb B}}

\newcommand\bE{{\mathbb E}}
\newcommand\bN{{\mathbb N}}
\newcommand\bP{{\mathbb P}}

\newcommand\bR{{\mathbb R}}

\newcommand\bZ{{\mathbb Z}}

\newcommand\cA{\mathcal{A}}
\newcommand\cB{{\mathcal B}}
\newcommand\cC{{\mathcal C}}

\newcommand\cF{{\mathcal F}}

\newcommand\cL{{\mathcal L}}

\newcommand\cN{{\mathcal N}}
\newcommand\cO{{\mathcal O}}
\newcommand\cP{{\mathcal P}}
\newcommand\cQ{{\mathcal Q}}

\newcommand\Card{{\rm Card}}
\newcommand\Id{{\mathds{1}}}

\newcommand\Var{{\Sigma}}

\def\EXP{\mathbb{E}}
\def\reals{\mathbb{R}}
\def\integers{\mathbb{Z}}

\def\dist{{\rm dist}}
\def\Const{{\rm Const}}


\title[Random Walk in Markovian environment]{Non-perturbative approach to random walk in markovian environment.}
\author{Dmitry Dolgopyat}
\address{Dmitry Dolgopyat\\
Department of Mathematics\\
University of Maryland\\
4417 Mathematics Bldg,  College Park,  MD 20742, USA}
\email{{\tt dmitry@math.umd.edu}}
\urladdr{http://www.math.umd.edu/\~\ \hskip-4pt dmitry}
\author{Carlangelo Liverani}
\address{Carlangelo Liverani\\
Dipartimento di Matematica\\
II Universit\`{a} di Roma (Tor Vergata)\\
Via della Ricerca Scientifica, 00133 Roma, Italy.}
\email{{\tt liverani@mat.uniroma2.it}}
\urladdr{http://www.mat.uniroma2.it/\~\ \hskip-4pt liverani}
\thanks{The first part of this work was written in Tongariro National Park. D.D. thanks T. Chesnokova and D. Kvasov for inviting him there.  We also acknowledge the support of the Erwin Schr\"odinger Institute where this work was completed (during the program {\em Hyperbolic Dynamical Systems}). We thank Ian Melbourne for suggesting reference \cite{Ea}.
We also thank Lise Poncelet, Jean Bricmont and the anonymous referees for several helpful remarks on the first version of this paper.  DD was partially supported by IPST}
\date{}
\keywords{Random Walk, Random environment, CLT, Gibbs} 
\subjclass[2000]{Primary 60K37, Secondary 60K35, 60F05, 35D20, 82C20}
\begin{document}
\begin{abstract}
We prove the CLT for a random walk in a dynamical  environment where the states of the 
environment at different sites are independent Markov chains.
\end{abstract}
\maketitle
\section{Introduction}
\label{sec:intro}
The study of random walk in random evolving environment has attracted much attention lately. The basic idea is that, due to the time mixing properties of the environment, the CLT should hold in any dimension. Many results have been obtained in the case of transition probabilities close to constant
(\cite{BS, BMP3, BMP4, St, RAS, BZ, DKL} etc) but  
there are still two open problems. On one hand one would like to consider environments with weaker 
mixing properties (some results in this direction have been obtained in \cite{CZ, DKL, DL}).
 On the other hand one would like to understand the case in which the dependence on the environment is not small. 
 The present paper addresses the second issue presenting a new, non-perturbative, approach to
 random walks in evolving environment. 

To make the presentation as transparent as possible we consider a very simple environment: at each site of $\bZ^d$ we have a finite state Markov chain and the chains at different sites are independent.
However, the present argument applies to the situation where
the evolution of each site is described by a Gibbs measure. In fact our approach relies on the fact that the environment
seen from the particle is Gibbsian. We hope that this fact can be established for a wide class of mixing environments
and so it will be useful for the first problem as well.


More precisely, at each site $u\in\integers^d$ consider a Markov chain $\{x_{u}^n\}_{n\in\bN}$ with 
the state space $\cA$ and transition matrix $p_{ab}>0$ for any $a$ and $b$. 
The chains at different sites are independent. Let 
$p_{ab}(k)$ denote the $k$ step transition probability and $\pi_a$
denote the stationary distribution corresponding to $p_{ab}$. Let $\Lambda$ be a finite subset of $\integers^d.$
For each $a\in \cA$ let $q_{a,v}$ be a probability distribution on $\Lambda$. Consider a random walk
$S_n$ such that $S_0=0$ and $S_{n+1}=S_n+v_n$ with probability $q_{x_{S_n}^n, v_n}$. 
Let $\bP$ denote the measure of the resulting Markov process on $\overline{\Omega}:=(\cA^{\bZ^d})^{\bN}\times (\bZ^d)^\bN$ when the environment is started with the stationary measure\footnote{In fact our main result
holds if the environment is started from any  initial measure and the proof requires very little change. We assume that
the initial measure is stationary since this allows us to simplify the notation a little.} 
and the walk starts from zero. We use $\bE$ to denote the associated expectation.
\begin{theorem}
\label{th-main}
For each $d\in\bN$ and random walk $\bP$ as above,
\begin{enumerate}[(a)]
\item  there exists $\bv\in\bR^d$ and a $d\times d$ matrix $\Var\geq 0$ such that 
\[
\begin{split}
&\lim_{n\to\infty} \frac1n \bE(S_n)=\bv\\
&\frac{1}{\sqrt n}\left[S_n-n\bv\right] \Longrightarrow \cN(0,\Var^2) \text{  under }\bP.
\end{split}
\] 
That is, $S_n$ satisfies an averaged (annealed)  Central Limit Theorem. 

\item $\Var>0$ unless there exists a proper affine subspace $\Pi\subset\reals^d$ such that for all $a\in\cA$ we have $q_{a,z}=0$ for $z\not\in \Pi.$
 
\item If $\Var>0$  then $S_n$ satisfies the quenched Central Limit Theorem, that is
 for almost every realization of $\{x_u^n\}$ the distribution of $\frac{S_n-n\bv}{\sqrt n}$ conditioned on
 $\{x_u^n\}$ converges to $\cN(0, \Var^2).$ 
\end{enumerate}
\end{theorem}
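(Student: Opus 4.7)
The plan is to reduce the CLT to one for an additive functional of the environment viewed from the walker, and then to control that Markov process via its Gibbsian structure.

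First I would introduce the process $\omega_n\in\cA^{\integers^d}$ defined by $\omega_n(u)=x^n_{u+S_n}$, i.e.\ the environment translated so that the walker always sits at the origin. Independence of the site chains makes $(\omega_n)_{n\in\bN}$ itself a Markov chain, and the walker's increment $v_n$ is then conditionally distributed as $q_{\omega_n(0),\cdot}$. Consequently $S_n$ decomposes as a bounded martingale-difference sum (which automatically satisfies the CLT) plus $\sum_{k<n} f(\omega_k)$ with $f(\omega)=\sum_v v\, q_{\omega(0),v}$, so everything reduces to ergodic-type statements for the environment chain $(\omega_n)$.

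The main step, and the principal obstacle, is to build a stationary measure $\mu$ for $(\omega_n)$ and to show that $\mu$ is a Gibbs measure with exponentially decaying correlations. Without the walker, $\prod_u \pi$ is invariant; with the walker, the translation step biases the sites that have been visited, and the marginal at the origin is distorted. I would realise $\mu$ as a fixed point of the environment transfer operator acting on a class of quasi-Gibbsian measures, showing that the modifications introduced by the walker decay exponentially with distance from the walker's current position, and that the resulting conditional distributions satisfy DLR equations with absolutely summable interaction. The technical heart is to exhibit a norm on local observables---of the form $\|g\|=\sum_u e^{\alpha|u|}\,\mathrm{osc}_u(g)$---on which the transfer operator contracts on the mean-zero subspace. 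Because $p_{ab}>0$ the single-site chain is uniformly mixing, so this contraction holds \emph{without any smallness assumption on} $q$; this is exactly the non-perturbative ingredient advertised in the introduction. Given this, part (a) follows by standard machinery: Gordin's martingale approximation yields a decomposition $\sum_{k<n}(f(\omega_k)-\EXP_\mu f) = M_n + o(\sqrt n)$ with $M_n$ a stationary ergodic martingale, giving the drift $\bv=\EXP_\mu f$, the Green--Kubo formula for $\Var^2$, and the weak limit.

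For (b), vanishing of $\langle\theta,\Var^2\theta\rangle$ for some $\theta\in\reals^d$ forces $\theta\cdot(v_k-\bv)$ to be an $(\omega_n)$-coboundary; irreducibility of the environment chain (guaranteed by $p_{ab}>0$) then forces $\theta\cdot v$ to be constant on the support of every $q_{a,\cdot}$, which is precisely the affine-subspace condition. For (c), I would bound the $L^2(\bP)$ fluctuations of the quenched characteristic function $\phi_n(t,\cdot)=\EXP\!\bigl[e^{it\cdot(S_n-n\bv)/\sqrt n}\bigm| \{x_u^k\}\bigr]$ by using the exponential decay of environment correlations established in the previous step: this shows that the quenched variance coincides with the annealed one $\bP$-a.s., upgrading (a) to the quenched CLT.
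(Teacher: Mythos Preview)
Your proposal takes the ``usual'' route---the \emph{full} environment as seen from the particle, $\omega_n\in\cA^{\bZ^d}$---and this is precisely what the paper avoids. The paper's key idea (flagged in its footnote) is to record only the \emph{local} environment $\omega_n=(x_{S_n}^n,v_n)\in\cB$, a \emph{finite} alphabet. The history process then lives on a one-dimensional full shift $\cB^{\bN}$, and the conditional law of $\omega_0$ given the past is governed by an explicit potential $\phi(\xi)=\log\bigl(p_{z_l,z_0}(-l)\,q_{z_0,v_0}\bigr)$, where $l$ is the last negative time the walk returns to $0$. The H\"older regularity of $\phi$ is immediate from the exponential convergence $p_{ab}(n)\to\pi_b$, and the classical Ruelle--Perron--Frobenius theorem on $\cC_\theta(\Omega^-)$ does the rest. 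Part~(a) then drops out of the standard CLT for Gibbs measures with H\"older potential; no martingale approximation or Green--Kubo formula is needed.

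By contrast, the step you label ``the technical heart''---contracting the transfer operator for the full environment chain on a weighted-oscillation norm, non-perturbatively in $q$---is asserted but not carried out, and it is far from clear that it goes through. The walker's motion couples sites along its entire past trajectory, and without smallness of the environment dependence there is no obvious reason the resulting interaction is absolutely summable or that your proposed norm contracts. This is exactly the difficulty the paper's local-environment reduction is designed to bypass. So as written, your part~(a) has a genuine gap: the contraction you need is the whole problem, and the paper solves it by changing the state space rather than by finding a clever norm. For~(b) the paper gives a shorter argument than yours: degeneracy implies $\langle S_n,w\rangle$ is uniformly bounded on \emph{all} of $\Omega^+$, and one simply exhibits two periodic sequences $(a_j,v_j)^\infty$ along which it is not. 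For~(c) the paper, like you, bootstraps from the annealed CLT, but by quoting the two-walker argument of \cite{DKL} rather than an $L^2$ bound on the quenched characteristic function.
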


\begin{rem}Note that the conditions $p_{ab}>0$ and the independence of the Markov chains at each site can be easily weakened. In fact, one can consider an irreducible Markov chain for which such a condition is verified only for time $n$ transition matrix $p_{ab}(n)$ and/or a situation in which the chains are independent only if at a distance $L$. Then essentially the same proof goes through.\footnote{In the following just look at the system each time $n$ and keep track of all the sites that are at a distance less then $L$ from the sites that the walk can visit in such a time.}
To consider more general environments, more work is needed.
\end{rem}
\section{Gibbs measures.} 
\label{ScGibbs}
Here we collect the information about one dimensional Gibbs measures used in our proof. 
The reader is referred to \cite{Ru} for more information and physics background. Let $\cB$
be a finite alphabet and $\bB$ be a $\Card(\cB)\times \Card(\cB)$ matrix (usually called {\em adjacency matrix}) whose entries are zeroes
and ones. Let $\Omega^+\subset\cB^{\bN} $ be the space of forward infinite sequences $\omega=\{\omega_j\}_{j=0}^\infty$ such that $\bB_{\omega_j \omega_{j+1}}=1$ for all $j$ (the sequences satisfying the last condition are called {\it admissible}). Likewise
we let $\Omega$ and $\Omega^-$ be the spaces of biinfinite and backward infinite admissible
sequences  respectively (in $\Omega^-$ the indices run from $-\infty$ to $0$).

Given $\theta\in (0,1)$ we can define distances $d_\theta$ on 
$\Omega, \Omega^+$ and $\Omega^-$ by 
$$d_\theta(\omega', \omega'')=\theta^k \text{ where }
k=\max\{j: \omega'_i=\omega''_i \text{ for } |i|\leq j\}. $$
We let $\cC_\theta(\Omega^+)$  (respectively $\cC_\theta(\Omega),$ $\cC_\theta(\Omega^-)$)  
denote the space of $d_\theta$-Lipshitz functions $\Omega^+\to\reals.$\footnote{Note that $\cC_\theta(\Omega^+)$ is a Banach space when equipped with the norm
\[
\|f\|_\theta:=|f|_\infty+\sup_{\omega',\omega''\in\Omega^+}\frac{|f(\omega')-f(\omega'')|}{d_\theta(\omega',\omega'')},
\]
where $|f|_\infty=\sup_{\omega'\in\Omega^+}|f(\omega')|$.
The analogous fact holds for $\cC_\theta(\Omega^-)$ and $\cC_\theta(\Omega)$.
}
We say that a function is {\it H\"older} if it belongs to $\cC_\theta$ for some $\theta.$ Let $\tau$ be the shift
map $(\tau\omega)_i=\omega_{i+1}.$ A $\tau$-invariant measure $\mu^+$ on $\Omega^+$
is called {\it Gibbs measure with H\"older potential} if 
\begin{equation}
\label{EqDLR}
 \phi(\xi)=\ln \mu^+(\{\omega_0=\xi_0\}| \omega_1=\xi_1\dots \omega_n=\xi_n\dots) 
\end{equation}
is H\"older. That is the conditional probability to see a given symbol at the beginning of the sequence
depends weakly on the remote future. The function $\phi$ given by \eqref{EqDLR} is called the {\it potential} of $\mu^+.$
In this paper we shall use the phrase `Gibbs measure' to mean Gibbs measure with H\"older potential. The Gibbs
measures for $\Omega^-$ are defined similarly.

If $\mu$ is a $\tau$-invariant measure on $\Omega$ we let $\mu^+$ and $\mu^-$ be its projections
(marginals) to $\Omega^+$ and $\Omega^-$ respectively. Observe that each element of this triple 
determines the other two uniquely. For example given $\mu^+$ we can recover $\mu$ using that, for each $k,m\in\bZ$, $k+m\geq 0$,
\[
\mu(\{\omega\in\Omega:\omega_{i}=\xi_i, -m\leq i\leq k\})=
\mu^+(\{\omega\in\Omega^+:\omega_{i}=\xi_i, 0\leq i\leq k+m\}). 
\]
We will call $\mu$ {\it the natural extension} of $\mu^+.$
\begin{proposition}[\bf Variational Principle](\cite[Theorem 3.5]{PP})
\label{PrVar}
The following are equivalent
\begin{enumerate}[(a)]
\item $\mu^+$ is a Gibbs measure

\item $\mu^-$ is a Gibbs measure

\item There is a H\"older function $\psi:\Omega\to\reals$ such that
$$ \mu(\psi)+h(\mu)=\sup\left(\nu(\psi)+h(\nu)\right) $$
where $h$ denotes the entropy and the supremum is taken over all $\tau$-invariant measures.
\end{enumerate}
\end{proposition}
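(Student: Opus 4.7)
The plan is to prove (a) $\Leftrightarrow$ (c) first, and then obtain (b) $\Leftrightarrow$ (c) from the symmetry of the variational problem under time reversal: the entropy $h(\mu)$ and the integral $\mu(\psi)$ are both invariant when $\tau$ is replaced by $\tau^{-1}$, and a H\"older function on $\Omega$ remains H\"older in the reversed topology. Applied to $\mu^-$, this yields (b) from the same equilibrium-state characterization.

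For (a) $\Rightarrow$ (c), I would start from the H\"older potential $\phi$ of \eqref{EqDLR} and introduce the Ruelle transfer operator
\[
(\cL_\phi f)(\omega)=\sum_{\tau\omega'=\omega} e^{\phi(\omega')}f(\omega')
\]
on $\cC_\theta(\Omega^+)$. The Ruelle--Perron--Frobenius theorem produces a simple dominant eigenvalue $\lambda>0$, a strictly positive H\"older eigenfunction $\rho$, and a positive eigenmeasure $\sigma$ of $\cL_\phi^*$. Setting
\[
\psi:=\phi+\log\rho-\log(\rho\circ\tau)-\log\lambda,
\]
one obtains $\cL_\psi\Id=\Id$, so the topological pressure $P(\psi)=0$. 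Viewing $\psi$ as a H\"older function on $\Omega$, the measure $\mu^+=\rho\,\sigma/\sigma(\rho)$ is $\tau$-invariant, and its natural extension $\mu$ satisfies $\mu(\psi)+h(\mu)=0=\sup_\nu(\nu(\psi)+h(\nu))$, establishing (c).

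For (c) $\Rightarrow$ (a), apply the same spectral construction directly to the H\"older $\psi$ provided by (c): uniqueness of the equilibrium state for H\"older potentials forces the maximising $\mu$ to coincide with the measure built from the dominant eigenfunction of $\cL_\psi$. A direct computation then shows that the conditional probability appearing in \eqref{EqDLR} is pointwise equal to $e^{\tilde\psi(\xi)}$, where $\tilde\psi$ is the H\"older normalization of $\psi$ obtained as above; this is exactly the Gibbs property of $\mu^+$.

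The main obstacle is the step (c) $\Rightarrow$ (a), where a global variational equality must be upgraded to a pointwise H\"older identity for conditional probabilities. It rests on the spectral gap of $\cL_\phi$ on $\cC_\theta(\Omega^+)$, which simultaneously delivers the uniqueness of the equilibrium state and the H\"older regularity of the Perron eigenfunction $\rho$; these two ingredients are precisely the technical core of \cite[Theorem 3.5]{PP}, and convexity of the pressure functional alone would not suffice to obtain the H\"older (as opposed to merely $L^\infty$) regularity required in \eqref{EqDLR}.
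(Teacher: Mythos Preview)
The paper does not prove this proposition at all: it is stated as a quotation of \cite[Theorem~3.5]{PP} and used as a black box. So there is no ``paper's own proof'' to compare against; your sketch is essentially the classical argument that one finds in \cite{PP} (or Bowen), based on the Ruelle--Perron--Frobenius spectral picture and the normalization $\psi=\phi+\log\rho-\log(\rho\circ\tau)-\log\lambda$.

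One genuine omission in your outline deserves mention. In the direction (c) $\Rightarrow$ (a) the potential $\psi$ is only given as a H\"older function on the \emph{two-sided} space $\Omega$, and the transfer operator $\cL_\psi$ you want to analyse lives on $\cC_\theta(\Omega^+)$. Before you can ``apply the same spectral construction directly to $\psi$'' you must first reduce to a one-sided potential: one shows (Sinai's trick, see \cite[Proposition~1.2]{PP}) that any H\"older $\psi$ on $\Omega$ is cohomologous, via a H\"older transfer function, to a H\"older function $\psi^+$ depending only on non-negative coordinates. Only then does the RPF machinery on $\Omega^+$ apply and produce the H\"older conditional probabilities of \eqref{EqDLR}. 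Without this step the argument has a gap, since $\cL_\psi$ is not even well defined on $\cC_\theta(\Omega^+)$ for a genuinely two-sided $\psi$. (Incidentally, in (a) $\Rightarrow$ (c) your normalization is harmless but redundant: the $\phi$ of \eqref{EqDLR} already satisfies $\cL_\phi 1=1$, so $\lambda=1$ and $\rho\equiv 1$.)
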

We will call the measures on $\Omega$ satisfying the conditions of the above proposition Gibbs measures.

If $\phi\in \cC_\theta(\Omega^-)$ we consider {\it transfer operator} on $\cC_\theta(\Omega^-)$ given by
\begin{equation}
\label{EqTO}
(\cL_\phi g)(\omega)=\sum_{\{\varpi\in\Omega^-:\tau^{-1}\varpi=\omega\}} e^{\phi(\varpi)} g(\varpi).  
\end{equation}

\begin{proposition}(\cite[Theorem 2.2]{PP})
\label{PrRPF}
Assume $\theta\in(0,1)$, $\phi\in\cC_\theta(\Omega^-)$ and
\begin{equation}
\label{EqNorm}
\cL_\phi(1)=1.
\end{equation}
Then,
\begin{enumerate}[(a)]
\item $|\cL_\phi(g)|_\infty\leq |g|_\infty.$

\item ({\bf  Ruelle-Perron-Frobenius Theorem})\footnote{In fact the Ruelle-Perron-Frobenius Theorem has a more general version where the condition \eqref{EqNorm} is not required. However that formulation is more
complicated and since we are going to apply this theorem in case $\phi$ is the log of conditional probability
(see \eqref{EqDLR}) the version given by Proposition \ref{PrRPF} is sufficient for our purposes.}
There exist constants $C>0, \gamma<1$ such that
$$\cL_\phi=\cQ+\cP $$
where $\cQ\cP=\cP\cQ=0,$ $\|\cQ^n\|_\theta\leq C\gamma^n $ and
$\cP(g)=\mu^-(g) 1$ where $\mu^-$ is the Gibbs measure with potential $\phi.$\footnote{Given any linear operator $A:\cC_\theta(\Omega^-)\to\cC_\theta(\Omega^-)$ we define, as usual,  $\|A\|_\theta:=\sup_{\|f\|_\theta=1}\|Af\|_\theta$.}
In particular, for each $g\in\cC_\theta(\Omega^-)$,
\[
 |\cL_\phi^n  g-\mu^-(g) 1|_\infty\leq  \|\cL_\phi^n  g-\mu^-(g) 1\|_\theta\leq C \gamma^n \|g\|_\theta. 
\]
\end{enumerate}
\end{proposition}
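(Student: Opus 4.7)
The plan is the standard spectral-gap strategy for normalized transfer operators: establish a Doeblin--Fortet/Lasota--Yorke inequality on $\cC_\theta(\Omega^-)$ to get quasi-compactness, then analyse the peripheral spectrum. Part (a) is immediate from $\cL_\phi 1=1$: the positive weights $\{e^{\phi(\varpi)}:\tau\varpi=\omega\}$ form a probability distribution on the preimages of $\omega$, whence
\[
|\cL_\phi g(\omega)|\le \sum_{\tau\varpi=\omega} e^{\phi(\varpi)}|g(\varpi)|\le |g|_\infty.
\]

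For part (b), the geometric input is that when $\omega,\omega'\in\Omega^-$ satisfy $d_\theta(\omega,\omega')=\theta^k$, the $\tau$-preimages pair canonically by matching the extra symbol $b\in\cB$ appended, giving pairs $(\omega b,\omega'b)$ with $d_\theta(\omega b,\omega'b)=\theta^{k+1}$. Writing
\[
\cL_\phi g(\omega)-\cL_\phi g(\omega')=\sum_b e^{\phi(\omega b)}\bigl(g(\omega b)-g(\omega'b)\bigr)+\sum_b\bigl(e^{\phi(\omega b)}-e^{\phi(\omega'b)}\bigr)g(\omega'b),
\]
the first sum is bounded by $\theta^{k+1}|g|_\theta$ thanks to $\sum_b e^{\phi(\omega b)}=1$, and the second by $C\theta^{k+1}|g|_\infty$ via Hölder continuity of $\phi$ together with the mean-value bound $|e^x-e^y|\le e^{|\phi|_\infty}|x-y|$. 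Dividing by $\theta^k$ delivers the Lasota--Yorke inequality
\[
\|\cL_\phi g\|_\theta\le \theta\,\|g\|_\theta + C|g|_\infty,
\]
which iterates to $\|\cL_\phi^n g\|_\theta\le \theta^n\|g\|_\theta + C'|g|_\infty$. Since $\Omega^-$ is compact under $d_\theta$, Arzel\`a--Ascoli makes the inclusion $\cC_\theta\hookrightarrow\cC(\Omega^-)$ compact, and Hennion's theorem then gives quasi-compactness of $\cL_\phi$ on $\cC_\theta$ with essential spectral radius at most $\theta$.

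It remains to identify the peripheral spectrum. The spectral radius is $1$ by $\cL_\phi 1=1$ and part (a). For any peripheral eigenpair $\cL_\phi g=\lambda g$ with $|\lambda|=1$, the pointwise triangle inequality yields $|g|\le \cL_\phi|g|$; integrating against a $\cL_\phi^*$-invariant probability measure (which exists by Schauder--Tychonoff applied to $\cL_\phi^*$ on the weak-$*$ compact simplex of probabilities) upgrades this to $\cL_\phi|g|=|g|$. At a point $\omega^*$ realising the supremum of $|g|$, the maximum principle forces $|g|\equiv|g(\omega^*)|$ on every $\tau$-preimage of $\omega^*$; topological mixing of the subshift (the standing hypothesis in the Pollicott--Parry setting) makes backward orbits dense, hence $|g|$ is constant. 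The equality case of the triangle inequality at $\omega^*$ then forces $g$ to take the single value $\lambda g(\omega^*)$ on all preimages of $\omega^*$; iterating and invoking mixing again yields $\lambda=1$ and $g$ constant. Thus $1$ is simple and the only peripheral eigenvalue, so quasi-compactness delivers $\cL_\phi=\cP+\cQ$ with $\cP$ the rank-one projection onto the constants, $\cQ$ of spectral radius $\gamma<1$, and $\|\cQ^n\|_\theta\le C\gamma^n$. Writing $\cP g=\ell(g)\cdot 1$, positivity together with $\ell(1)=1$ makes $\ell$ a probability measure, and $\cL_\phi^*\ell=\ell$ combined with the H\"older regularity of $\phi$ identifies $\ell$ with the Gibbs measure $\mu^-$ with potential $\phi$ through the defining identity \eqref{EqDLR}. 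The most delicate step throughout is the peripheral-spectrum analysis, where topological mixing of the subshift is essential; the quasi-compactness portion is essentially a mechanical consequence of the Lasota--Yorke inequality.
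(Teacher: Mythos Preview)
The paper does not supply its own proof of this proposition: it is simply quoted from \cite[Theorem 2.2]{PP} as a black box, so there is nothing in the paper to compare your argument against. Your proposal is essentially the standard proof of the normalized Ruelle--Perron--Frobenius theorem (Lasota--Yorke inequality, Ionescu-Tulcea--Marinescu/Hennion quasi-compactness, maximum-principle analysis of the peripheral spectrum), which is indeed the route taken in \cite{PP}.

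A couple of small points worth tightening if you intend this as a self-contained proof rather than a sketch: in the step upgrading $|g|\le\cL_\phi|g|$ to equality you implicitly use that the $\cL_\phi^*$-invariant probability has full support, which in turn relies on topological mixing and should be stated; and the passage from ``$g$ has constant modulus and the triangle-inequality equality case holds'' to ``$\lambda=1$'' deserves one more line (write $g=|g|e^{i\psi}$, deduce $\psi\circ\sigma=\psi+\arg\lambda$ on a dense set, and use mixing to force $\arg\lambda=0$). Finally, note that the proposition as stated in the paper does not explicitly assume topological mixing of the subshift $(\Omega^-,\bB)$, though this is the standing hypothesis in \cite{PP} and is trivially satisfied in the paper's application, where $\bB$ is the full matrix.
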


The next result is a combination of \cite[Theorem 4.13 and Proposition 4.12]{PP} and
\cite[section 4.2a]{Ea} (see also \cite[section 3.6]{Aa}, \cite{Z}).

\begin{proposition}[{\bf CLT in the sense of Renyi}] 
\label{PrCLT}
Let $\mu^+$ be a Gibbs measure and $\nu^+$ 
be a measure absolutely continuous with respect to $\mu^+.$ Let $g$ be H\"older and denote
$G_n(\omega)=\sum_{j=0}^{n-1} g(\tau^j \omega)$ where $\omega$ is distributed according to 
$\nu^+.$

(a) $\frac{G_n-n \mu^+(g)}{\sqrt n}\Longrightarrow \cN(0, \sigma^2)$ where
$ \sigma^2=\mu^+(g^2)+2\sum_{j=1}^\infty \mu^+(g \cdot g\circ \tau^j) . $

(b) $\sigma^2=0$ iff $|G_n-n\mu^+(g)1 |_\infty$ is uniformly bounded.
\end{proposition}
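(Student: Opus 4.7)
The plan is to combine the spectral gap from Proposition \ref{PrRPF} with the Nagaev--Guivarc'h perturbative method, first establishing (a) under $\mu^+$ and then passing to an arbitrary absolutely continuous $\nu^+$ by exploiting the mixing of the transfer operator; part (b) will then follow from the same spectral picture via a cohomological identification.

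For the CLT under $\mu^+$, I would pass to the natural extension so that all Birkhoff sums are realised on $\Omega$, and consider the family of twisted transfer operators $\cL_t f := \cL_\phi(e^{itg} f)$ on $\cC_\theta(\Omega^-)$ (after the standard one-sided/two-sided identification, replacing $g$ with a cohomologous observable depending only on the past if necessary). Since $\cL_0 = \cL_\phi$ has the simple isolated eigenvalue $1$ separated from the rest of its spectrum by the gap granted by Proposition \ref{PrRPF}, analytic perturbation theory yields, for small $|t|$,
\[
\cL_t^n = \lambda(t)^n \Pi_t + R_t^n, \qquad \|R_t^n\|_\theta \leq C\gamma^n,
\]
with $\lambda$ analytic at the origin, $\lambda(0) = 1$, and $\lambda(t) = 1 + it\mu^+(g) - \tfrac12 t^2 (\sigma^2 + \mu^+(g)^2) + O(t^3)$, the identification of the second-order coefficient with the Green--Kubo series being obtained by expanding $\mu^+(e^{itG_n})$ and matching with the second moment of $G_n$. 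Combining the identity $\mu^+(e^{itG_n/\sqrt n}) = \mu^-(\cL_{t/\sqrt n}^n 1)$ with the expansion of $\lambda(t/\sqrt n)^n$ gives characteristic-function convergence to $e^{it\sqrt n\,\mu^+(g)}e^{-t^2\sigma^2/2}$, and L\'evy's theorem yields (a) under $\mu^+$.

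To promote this to $\nu^+$, write $h := d\nu^+/d\mu^+ \in L^1(\mu^+)$. For $K$ fixed and large, decompose $G_n = G_K + G_{n-K}\circ\tau^K$; the first summand is $O(K)$ and disappears on the $\sqrt n$ scale. Using $\mu^+$-duality of the transfer operator with the shift, one has $\nu^+(F\circ\tau^K) = \mu^+((\cL_\phi^K h)\, F)$ for any bounded Borel $F$. Proposition \ref{PrRPF}, extended from $\cC_\theta$ to $L^1$ by density and the $L^\infty$-contraction $|\cL_\phi f|_\infty \leq |f|_\infty$, delivers $\|\cL_\phi^K h - 1\|_{L^1(\mu^+)}\to 0$. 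Inserting $F_n := e^{it(G_{n-K} - (n-K)\mu^+(g))/\sqrt n}$ and invoking the CLT under $\mu^+$ transfers characteristic-function convergence to $\nu^+$, first for fixed $K$ as $n\to\infty$ and then letting $K\to\infty$.

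For (b) the easy direction is immediate: if $g - \mu^+(g) = v\circ\tau - v$ with $v$ bounded, then $G_n - n\mu^+(g) = v\circ\tau^n - v$ is uniformly bounded, which via (a) forces $\sigma^2 = 0$. For the converse, the summability $\sum_k \|\cL_\phi^k(g-\mu^+(g))\|_\theta \leq C\sum_k \gamma^k < \infty$ granted by Proposition \ref{PrRPF} (applied to the zero-mean H\"older function $g - \mu^+(g)$) lets me define $v := \sum_{k\geq 0}\cL_\phi^k(g-\mu^+(g))$ solving $(I-\cL_\phi)v = g - \mu^+(g)$; a Liv\v{s}ic-type regularity argument on the natural extension then upgrades this identity to a genuine H\"older coboundary equivalent to $\sigma^2 = 0$. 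The principal technical obstacle is precisely this last step in the one-sided setting, together with justifying the exchange of the limits $n\to\infty$ and $K\to\infty$ in the passage from $\mu^+$ to $\nu^+$; these are the main reasons the literature splits the proof across \cite{PP}, \cite{Ea}, and related references.
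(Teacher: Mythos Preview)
The paper does not prove Proposition~\ref{PrCLT}; it is quoted as a known result, attributed to \cite[Theorem~4.13 and Proposition~4.12]{PP} for the CLT and the variance dichotomy under $\mu^+$, and to \cite[\S4.2a]{Ea} (with \cite[\S3.6]{Aa}, \cite{Z}) for the passage to an arbitrary absolutely continuous $\nu^+$. Your sketch reproduces exactly the contents of those references: the Nagaev--Guivarc'h spectral perturbation of $\cL_\phi$ is the argument in \cite{PP}, and your transfer-operator/density argument for upgrading convergence from $\mu^+$ to $\nu^+$ is one realisation of Eagleson's mixing-limit theorem. So the approach is the same as what the paper cites; there is simply no in-paper proof to compare against.

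One caution on part~(b): the function $v=\sum_{k\geq 0}\cL_\phi^k(g-\mu^+(g))$ always exists by the spectral gap, regardless of whether $\sigma^2=0$, and the identity $(I-\cL_\phi)v=g-\mu^+(g)$ by itself does not yield a coboundary $g-\mu^+(g)=u\circ\tau-u$. The substance of \cite[Proposition~4.12]{PP} is rather that $\sigma^2=0$ forces $g-\mu^+(g)$ to be an $L^2$ (and then H\"older) coboundary, after which uniform boundedness of $G_n-n\mu^+(g)$ follows. You correctly flag this as the delicate step, so this is not a gap so much as a point where your write-up conflates two different equations.
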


\section{Local environment as seen from the particle}
\label{sec:mix}

To prove Theorem \ref{th-main} we consider the history  $\omega$ of the {\bf local} environment as seen from the particle.\footnote{That is, we record only the environment at the visited sites, contrary to the usual strategy of considering all the environment.} More precisely $\omega_n$ is a pair $(x_{S_n}^n, v_n)$. 
Let $\Omega^+$ be the set of all possible histories. Thus $\Omega^+$ is a space of forward infinite sequences. 
Let $\cB=\{(a,v)\in\cA\times \Lambda\;:\; q_{a,v}>0\}$ be the alphabet for $\Omega$
(in our simple case the adjacency matrix is given by $\bB_{(a', v'), (a'', v'')}=1$, i.e. we have the full shift) and consider the spaces $\Omega$ and $\Omega^-$ defined in Section
\ref{ScGibbs}. We shall use the notation $\bP^+$ 
for the measure induced by $\bP$ on $\Omega^+.$\footnote{Indeed, the map $\Psi:\overline\Omega\to\Omega^+$ defined by $\Psi((x^n,S_n)_{n\in\bN})=(x_{S_n}^n,S_{n+1}-S_n)_{n\in\bN}$ is measurable and onto. Thus $\bP^+(A):=\bP(\Psi^{-1}(A))$. With a slight abuse of notation we will use $\bE$ also for the expectation with respect to $\bP^+$.}

Let $\cF_k$ denote the $\sigma$-algebra generated by $\omega_0,\dots \omega_k$.

\begin{lemma}
\label{lm-mix}
There exists a shift invariant measure $\mu$ on $\Omega$ and $\gamma\in(0,1)$ such that 
for any $k\in\bN$ there is a constant $C_k$ such that
for any $n, m\in\bN$, and any $\cF_k$ measurable function $f$ we have
\[
|\EXP(f\circ\tau^{n}\;|\;\cF_m)-\mu(f)|\leq C_k|f|_\infty \gamma^{n-m}.
\]
In addition, $\mu$ is a Gibbs measure.

Finally, $\bP^+$ is absolutely continuous with respect to $\mu^+$.
\end{lemma}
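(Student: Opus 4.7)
The plan is to identify $\mu$ as the Gibbs measure with an explicit H\"older potential on $\Omega^-$ and then transfer its exponential mixing to $\bP^+$ via absolute continuity. Concretely, for $\varpi\in\Omega^-$ with $\varpi_i=(a_i,v_i)$, I reconstruct the trajectory by $S_0=0$ and $S_i=S_{i+1}-v_i$ for $i<0$, let $j(\varpi)$ be the largest $i<0$ with $S_i=0$ (or $-\infty$ if no such $i$ exists), and set
\[
\phi(\varpi)=\ln q_{a_0,v_0}+\begin{cases}\ln\pi_{a_0}, & j(\varpi)=-\infty,\\ \ln p_{a_{j(\varpi)},a_0}(|j(\varpi)|), & \text{otherwise}.\end{cases}
\]
By construction this is the log of the one-step conditional probability for $\varpi_0$ given the past $\varpi_{-1},\varpi_{-2},\ldots$: sites absent from the past trajectory carry the stationary law $\pi$ (by the independence of the per-site chains), while at the previously visited site $S_0=0$ the state follows the $|j|$-step Markov transition from $a_{j(\varpi)}$. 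In particular $\cL_\phi 1=1$.

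Next I would check $\phi\in\cC_\rho(\Omega^-)$. Because all $p_{ab}>0$ the single-site chain satisfies $|p_{ab}(n)-\pi_b|\le C\rho^n$ for some $\rho<1$, with $p_{ab}(n),\pi_b\ge c>0$ uniformly. If $\varpi',\varpi''$ agree on indices $-k\le i\le 0$, the induced trajectories agree up to time $-k$: either both sequences already realize the most recent pre-$0$ return to $0$ inside this window (so $\phi(\varpi')=\phi(\varpi'')$), or neither does, in which case the relevant most-recent visits lie at times $\le-k$ (or never), each log-factor is within $C\rho^k$ of $\ln\pi_{a_0}$, and $|\phi(\varpi')-\phi(\varpi'')|\le C'\rho^k$. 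Thus $\phi$ is H\"older, and Proposition \ref{PrRPF} produces a Gibbs measure $\mu^-$ together with the exponential RPF contraction of $\cL_\phi$ at some rate $\gamma<1$. Let $\mu$ be its natural extension and $\mu^+$ its forward projection; by Proposition \ref{PrVar} both are Gibbs.

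Third, I would establish $\bP^+\ll\mu^+$ with a bounded, strictly positive density. Both measures give $\omega_0$ the marginal $\pi_{a_0}q_{a_0,v_0}$ by $\pi$-invariance. For $i\ge 1$ the $\bP^+$-conditional probability of $\omega_i$ given $\omega_0,\ldots,\omega_{i-1}$ coincides with $e^\phi$ on the induced trajectory when $S_i$ has a previous visit in $\{0,\ldots,i-1\}$, and equals $\pi_{a_i}q_{a_i,v_i}$ when $S_i$ is novel; the corresponding $\mu^+$-conditional averages $e^\phi$ over the integrated-out pre-$0$ past, and by Step~2 differs from its $\bP^+$-counterpart by at most $C\rho^i$ on each novel-site step (and coincides with it otherwise). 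Writing $d\bP^+/d\mu^+|_{\cF_n}$ as the telescoping product of these one-step ratios, each factor is $1+O(\rho^i)$, so the product converges uniformly to a bounded, strictly positive density.

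Finally, the mixing assertion follows in two stages. For $\mu$ itself, the bound $|\bE^\mu(f\circ\tau^n\mid\cF_m)-\mu(f)|\le C_k|f|_\infty\gamma^{n-m}$ for $\cF_k$-measurable $f$ is a standard consequence of the exponential RPF estimate in Proposition \ref{PrRPF}(b): Gibbs measures with H\"older potential are uniformly mixing at geometric rate. The bounded density from Step~3 then transports the estimate from $\bE^\mu$ to $\bE^{\bP^+}$, since conditional expectations under the two measures differ by a factor equal to the $\cF_m$-conditional average of the Radon--Nikodym density, which itself approaches $1$ at the same geometric rate by the same RPF argument. The main technical obstacle is the combinatorial bookkeeping of site revisits in Step~2 and the uniform convergence of the telescoping product in Step~3; the rest is routine Ruelle--Perron--Frobenius.
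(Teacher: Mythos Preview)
Your potential $\phi$ and the H\"older estimate coincide with the paper's, and your telescoping-product argument for absolute continuity is a pleasant alternative to the paper's transfer-operator comparison (it even yields the stronger conclusion that $d\bP^+/d\mu^+$ is bounded above and below). The organization differs in the mixing step, however, and there your final transport argument has a gap.

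The paper does not prove mixing for $\mu$ first and then transport. Instead it writes
\[
\bE(f\circ\tau^n\mid\cF_m)(\omega^m)=\bigl(\cL_{m+1}\cdots\cL_{n+k}\hat f_k\bigr)(\xi^*\omega^m),
\]
where $\cL_j=\cL_{\phi_j}$ and $\phi_j$ is the truncation of $\phi$ to the last $j$ symbols (so $e^{\phi_j}$ is exactly the $\bP^+$ one-step conditional at time $j$). Since $|\phi_j-\phi|_\infty=O(\lambda^j)$, one replaces the last $l\approx(n-m)/2$ factors by $\cL^l$ at cost $O(\lambda^{n+k-l})$, and then RPF handles $\cL^l$. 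The split-in-half is what produces the exponent $n-m$.

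Your transport, as written, does not produce this exponent. The claim that ``conditional expectations under the two measures differ by a factor equal to the $\cF_m$-conditional average of the Radon--Nikodym density'' is not the right identity: one has $\bE^{\bP^+}(g\mid\cF_m)=\bE^{\mu^+}(hg\mid\cF_m)/\bE^{\mu^+}(h\mid\cF_m)$, and your Step~3 gives $h=h_m\cdot(1+O(\rho^m))$ with $h_m$ $\cF_m$-measurable, so the best you obtain directly is
\[
\bE^{\bP^+}(f\circ\tau^n\mid\cF_m)=\bE^{\mu^+}(f\circ\tau^n\mid\cF_m)+O(\rho^m|f|_\infty).
\]
The error $O(\rho^m)$ does not decay in $n-m$ (take $m=0$, $n\to\infty$). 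The fix is exactly the paper's splitting trick, rephrased in your language: condition first on $\cF_r$ with $r\approx(n+m)/2$, use your telescoping estimate to get $\bE^{\bP^+}(f\circ\tau^n\mid\cF_r)=\bE^{\mu^+}(f\circ\tau^n\mid\cF_r)+O(\rho^r|f|_\infty)$, apply $\mu$-mixing to the latter (error $O(C_k\gamma^{n-r})$), and then take $\bE^{\bP^+}(\,\cdot\mid\cF_m)$. Both errors are then $O(\tilde\gamma^{\,n-m})$ with $\tilde\gamma=\max(\sqrt\rho,\sqrt\gamma)$. So your route works, but the last step is not ``routine RPF'' without this intermediate conditioning.
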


\begin{proof}[Proof of Lemma \ref{lm-mix}]
Note that the case $m\geq n+k$ is trivially true, we then restrict to  $m<n+k$.

For $\xi\in\Omega^-$, with $\xi_i=(z_i, v_i)$, and $n\in\bN$ let
\begin{equation}\label{eq:per}
\begin{split}
 &e^{\phi(\xi)}=
\begin{cases} p_{z_l, z_0 }(-l)\cdot q_{z_0 v_0} & \text{ if } \;l=\bl(\xi)\neq -\infty \cr
                       \pi_{z_0} \cdot q_{z_0 v_0} & \text{ if } \;\bl(\xi)= -\infty \cr
\end{cases} \\
& e^{\phi_n(\xi)}=
\begin{cases} p_{z_l, z_0 }(-l)\cdot q_{z_0 v_0} & \text{ if } \;l=\bl(\xi)\geq -n \cr
                        \pi_{z_0} \cdot q_{z_0 v_0}  & \text{ if } \;\bl(\xi)< -n \cr
\end{cases}
\end{split} 
\end{equation}
where $\bl(\xi)$ is the last time before $0$ such that $S_l:=\sum_{i=l}^{-1}v_i=0$.\footnote{In other words we
extend $S_n$ to negative $n$ using $\xi_i$ with negative indices and look at the last negative time when the walk visits
zero.} We have
\begin{equation}
\label{LossMemory}
|\phi_n-\phi|_\infty=\cO\left(\lambda^n\right)
\end{equation}
where $\lambda$ is the second eigenvalue of $p_{ab}.$ 
In fact $\phi(\xi)=\phi_n(\xi)$ unless $\bl(\xi)<-n$ 
in which case the distribution of $p_{z_l z_0}(-l)$ is $\cO(\lambda^n)$-close to~$\pi_{z_0}$.

Recall \eqref{EqTO} and 
consider the following transfer operators $\cL:=\cL_\phi,$ $\cL_n:=\cL_{\phi_n}.$
Note that $\cL_n1=\cL1=1$ and that \eqref{LossMemory} implies that there exists $C>0$ such that, for all continuous $g:\Omega^-\to\bR$, 
\begin{equation}
\label{EqTOClose}
|\cL g-\cL_n g|_\infty\leq C\lambda^n|g|_\infty. 
\end{equation}

Next, chose $\xi^* \in\Omega^-$ and for each $b\in\cB$, $m\in\bN$, $\omega^m=(\omega_0,\dots,\omega_m)\in\cB^m$ let $\xi^* \omega^m b\in\Omega^-$ be the concatenation of $\xi^*,$ $\omega^m$ and $b.$\footnote{Essentially $\xi^*$ corresponds to the choice of a `standard past' for any finite sequence.}
Then
\[
 \bP(\{\omega_{m+1}=b\}|\cF_{m})(\omega_0,\dots,\omega_m)=\exp\left(\phi_{m+1}\left(\xi^*\omega^m b \right)\right).
\]
By hypotheses there exists $\tilde f:\cB^{k+1}\to\bR$ such that $f(\omega)=\tilde f(\omega_0,\dots,\omega_k)$.
Observe that if $g:\Omega^+\to\reals$ is a $\cF_{m+1}$ measurable function, then
\begin{equation}\label{eq:compute}
\EXP(g |\cF_m)(\omega_0,\dots,\omega_m)=\sum_{b\in\cB} \bP(\{\omega_{m+1}=b\}|\cF_m)g(\omega_0,\dots,\omega_m,b).
\end{equation}
Therefore for each $l\in\{0,\dots,n+k-m\}$ we have
\begin{equation}\label{eq:condi}
\EXP(f\circ \tau^{n}\;|\;\cF_m)(\omega^m)=
(\cL_{m+1}\cdots \cL_{n+k}\hat f_k)(\xi^*\omega^m),
\end{equation}
where, for $\xi\in \Omega^-$, we define $\hat f_k(\xi)=\tilde f(\xi_{-k},\dots,\xi_0).$ 
Hence, by \eqref{EqTOClose},
\begin{equation}
\EXP(f\circ \tau^{n}\;|\;\cF_m)(\omega^m)
=\cL_{m}\cdots \cL_{n+k-l}\cL^{l}\hat f_k+\cO(\lambda^{n+k-l}|f|_\infty).
\end{equation}
Next, the Ruelle-Perron-Frobenius Theorem (Proposition \ref{PrRPF}(b)) for $\cL$ and the fact that $\cL 1=1$ imply that there are a Gibbs measure $\mu^-$ on $\Omega^-$, and numbers $\theta,\tilde\gamma\in(0, 1)$, $\tilde\gamma\geq \theta$, such that
\[
\EXP(f\circ \tau^{n}\;|\;\cF_m)
=\cL_{m+1}\cdots \cL_{n+k-l}1\cdot \mu^-(\hat f_k)+\cO(\lambda^{n+k-l}+\theta^{-k}\tilde\gamma^{l})|f|_\infty.
\]
Choosing $l=\frac{n-m}2$ (which is smaller than $n+k-m$ by hypotheses) and $\gamma^2=\max\{\lambda,\tilde\gamma\}$ we obtain
$$ \EXP(f\circ \tau^{n}\;|\;\cF_m)
=\mu^-(\hat f_k)+\cO(C_k\gamma^{n-m})|f|_\infty. 
$$
Since $\mu^{-}$ is Gibbs, Proposition \ref{PrVar} implies that $\mu$ and 
$\mu^+$ are Gibbs.

To prove the absolute continuity note that equations \eqref{eq:condi}, \eqref{EqTOClose}
imply for $f\geq 0$ and $\cF_k$ measurable\footnote{The first equality follows by the freedom in the choice of $\xi^*$, 
the last is true because $b$ can take only finitely many values and so $\inf_b\mu^-(\{\omega_0=b\})>0$.}
\[
\begin{split}
\bE(f\;|\;\cF_0)(b)&=\inf_{\xi\in \Omega^-}\cL_{1}\cdots\cL_{k}\hat f_k(\xi b) 
=\inf_{\xi\in \Omega^-}\frac{\cL_{1}\cdots\cL_{k}\hat f_k(\xi b)}{\cL^k\hat f_k(\xi b)}\cL^k\hat f_k(\xi b)\\
&\leq C\inf_{\xi\in \Omega^-} \cL^k\hat f_k(\xi b)\leq C\frac{\mu^-(\Id_{\{\omega_0=b\}}\cL^k\hat f_k)}{\mu^-(\{\omega_0=b\})}\leq C'\mu^+(f). 
\end{split}
\]
Thus $\bE(f)\leq C'\mu^+(f)$, which implies the absolute continuity.
\end{proof}

\section{Proof of Theorem \ref{th-main}} 
\begin{proof}
Part (a) follows from Proposition \ref{PrCLT}(a) and Lemma \ref{lm-mix}.

Next we analyze the possibility that there exists $w\in\bR^d$ such that $\Var^2w=0$. 
In this case Proposition \ref{PrCLT}(b) implies that 
$\langle S_n , w\rangle$ is uniformly bounded. Now, suppose there exist pairs $(a_1, v_1)$
and $(a_2, v_2)$ such that $q_{a_j, v_j}>0$ and $\langle v_1, w\rangle\neq \langle v_2, w\rangle$. 
Let $\xi^{(j)}=\{\omega^j_k\}_{k=0}^\infty,$ $j=1,2$ where $\omega^j_k\equiv (a_j, v_j). $
Then 
$\langle S_n, w\rangle $ cannot be bounded along both the orbits defined by the sequences 
$\xi^{(1)}$ and $\xi^{(2)}.$ This proves (b). 

The quenched CLT was derived from the annealed CLT in \cite[sections 3.2 and 3.3]{DKL}
under the assumption that $q_{a,v}$ was weakly dependent on $a.$ However this assumption 
was not used in this part of \cite{DKL}. Indeed what we need to prove in order to conclude the proof of the present theorem is the equivalent of Theorem 1 in \cite{DKL}. The proof of such a theorem relies only on the mixing of the environment as seen from the particle
 (our Lemma \ref{lm-mix})\footnote{In fact, the process determined by the point of view of the particle is different here from the one used in  \cite{DKL}, as already remarked. Yet, in both cases the walk is an additive functional of such a process and the same arguments apply.}  and the estimate \cite[(2.21)]{DKL}.
 In turn \cite[(2.21)]{DKL} follows from \cite[Lemma 3.2]{DKL} by a general argument that uses only the mixing property of the process (in our case implied by Lemma \ref{lm-mix}) and Assumption (A4) of \cite{DKL} that, in our case, can be replaced by the stronger property

{\em Let $S_n'$ and $S_n''$ be two independent walkers moving in the same environment.
Then conditioned on the event
$$\{\dist\left(S_N', S_N''\right)>m\} $$
the increments of $S_k'$ and $S_k''$ are independent for $k\in \left(N, N+\frac{m}{\Const}\right).$}
(This holds in our case since the chains at different sites are independent.)

Finally, \cite[Lemma 3.2]{DKL} follows from \cite[Lemma 3.3]{DKL}. The only property used in such a derivation is that for any $m$ there exists $N$ such that $S'$ and $S''$ can get $m$ units apart during the time $N$ with positive probability (in our case this follows under the hypothesis that ensure Theorem \ref{th-main}(b)). Such a fact also suffices for the proof of the analogue of \cite[Lemma 3.4]{DKL} and for the derivation of \cite[Lemma 3.3]{DKL} from \cite[Lemma 3.4]{DKL} in the present context.

In conclusion, the proof carries to the present setting without any substantial changes.
\end{proof} 


\begin{thebibliography}{99}
\bibitem{Aa} Aaronson J., {\em  An introduction to infinite ergodic theory} Mathematical Surveys and Monographs, 50. American Mathematical Society, Providence, RI, 1997.

\bibitem{BZ} Bandyopadhyay A., Zeitouni, O., {\em Random Walk in
Dynamic Markovian Random Environment}, ALEA 
{\bf 1} (2006) 205--224.


\bibitem{BMP3} Boldrighini C., Minlos R.A., Pellegrinotti A., 
{\it Random walks in quenched i.i.d. space-time random environment are
always a.s. diffusive}. Probab. Theory Related Fields {\bf 129}
(2004), no. 1, 133--156. 

\bibitem{BMP4} Boldrighini C., Minlos R.A., Pellegrinotti A., 
{\it Random walk in random (fluctuating) environment} Russian Math Surveys
{\bf  62} (2007) 663--712.

\bibitem{BS} Bolthausen E., Sznitman A.--S., {\em On the static and
dynamic points of view for certain random walks in random
environment}, Methods Appl. Anal., {\bf 9}, 3, 345--375 (2002).

\bibitem{CZ} Comets F., Zeitouni O.,
{\it Gaussian fluctuations for random walks in random mixing environments,}
Israel J. Math. {\bf 148} (2005), 87--113. 


\bibitem{DKL} Dolgopyat D., Keller G., Liverani C., 
{\em Random Walk in Markovian Environment,} 
Annals of Probability {\bf 36} (2008) 1676--1710.

\bibitem{DL} Dolgopyat D., Liverani C., {\em Random Walk in Deterministically Changing Environment,}
ALEA {\bf 4} (2008) 89--116.

\bibitem{Ea} Eagleson G. K., {\em Some simple conditions for limit theorems to be mixing}. (Russian) Teor. Verojatnost. i Primenen. {\bf 21} (1976), no. 3, 653--660. (Engligh translation: Theor. Prob. Appl. {\bf 21} (1976) 637--642, 1977.)

\bibitem{PP} Parry W., Pollicott M.,
{\em Zeta functions and the periodic orbit structure of
hyperbolic dynamics,}
Asterisque, {\bf 187--188} (1990) 268 pp.


\bibitem{RAS}
Rassoul-Agha F., Seppalainen T., 
{\em An almost sure invariance principle for random walks in a 
space-time i.i.d. random environment,}  
Prob. Th., Related Fields  {\bf 133}  (2005) 299--314.


\bibitem{Re} R\'enyi A., {\em On mixing sequences of sets}, Acta math. Acad. Sci. Hungar. {\bf 9} (1958).

\bibitem{Ru} Ruelle D., {\em Statistical mechanics. Rigorous results}. Reprint of the 1989 edition. World Scientific Publishing Co., Inc., River Edge, NJ; Imperial College Press, London, 1999.

\bibitem{St} Stannat W., {\em A remark on the CLT for random walks in a
random environment}, Probability Theory and Related Fields, {\bf
130},3, 377-387 (2004).

\bibitem{Z}  Zweim\"uller R., {\em  Mixing limit theorems for ergodic transformations},
Journal of Theoretical Probability {\bf 20} (2007), 1059-1071.

\end{thebibliography}
\end{document}